 \renewcommand{\arraystretch}{1.4}
\definecolor{labelkey}{gray}{.8}
\definecolor{refkey}{gray}{.8}
\definecolor{darkblue}{rgb}{0,0,0.7} 
\definecolor{darkred}{rgb}{0.9,0.1,0.1}
\definecolor{darkgreen}{rgb}{0,0.5,0}
\newtheorem{thm}{Theorem}[section]
\newtheorem{prop}[thm]{Proposition}
\newtheorem{lem}[thm]{Lemma}
\newtheorem{cor}[thm]{Corollary}
\theoremstyle{remark}
\theoremstyle{definition}
\newcommand\norm[1]{\left\lVert#1\right\rVert}
\newcommand\set[1]{\left\{#1\right\}}
\newcommand\abs[1]{\left\lvert#1\right\rvert}
\renewcommand{\le}{\leqslant}
\renewcommand{\leq}{\leqslant}
\renewcommand{\geq}{\geqslant}
\renewcommand{\subset}{\subseteq}
\newcommand{\N}{\mathbb{N}}
\newcommand{\1}{\mathbf{1}}
\newcommand{\R}{\mathbb{R}}
\newcommand{\Z}{\mathbb{Z}}
\newcommand{\T}{\mathbb{T}}
\renewcommand{\S}{\mathbb S}
\renewcommand{\P}{\mathcal{P}}
\renewcommand{\div}{\mathop{\rm div}}
\newcommand{\dd}{\, \mathrm{d}}
\newcommand{\dmin}{d_{\mathrm{min}}}
\newcommand{\De}{\mathrm{De}}
\DeclareMathOperator{\Sym}{Sym}
\DeclareMathOperator{\curl}{curl}
\DeclareMathOperator{\Id}{Id}
\DeclareMathOperator{\dv}{div}
\DeclareMathOperator*{\supp}{supp}
\DeclareMathOperator{\pert}{pert}
\numberwithin{equation}{section}
\title{Non-existence of mean-field models for particle orientations in suspensions}
\author[1]{Richard M. H\"ofer\thanks{hoefer@imj-prg.fr}}
\author[1]{Amina Mecherbet\thanks{mecherbet@imj-prg.fr}}
\author[2]{Richard Schubert\thanks{schubert@iam.uni-bonn.de}}
\affil[1]{Institut de Mathématiques de Jussieu -- Paris Rive Gauche, Université Paris Cit\'e, France}
\affil[2]{Institute for Applied Mathematics, University of Bonn, Germany}
\begin{document}

\maketitle






\begin{abstract}
    We consider a 
    suspension of spherical inertialess particles in a Stokes flow on the torus $\T^3$. The particles perturb a linear extensional flow due to their rigidity constraint. 
    Due to the singular nature of this perturbation, no mean-field limit for the behavior of the particle orientation can be valid. This contrasts with widely used models in the literature such as the FENE and Doi models and similar models for active suspensions.
    The proof of this result is based on the study of the mobility problem of a single particle in a non-cubic torus, which we prove to exhibit a nontrivial coupling between the angular velocity and a prescribed strain. 
\end{abstract}


\section{Introduction}

It is well known that inertialess rigid particles suspended in a fluid change the rheological properties of the fluid flow.
For passive non-Brownian particles this accounts to an increased viscous stress. In more complex models like active (self-propelled) particles or non-spherical Brownian particles, an additional active or elastic  stress arises that renders the fluid viscoelastic.
Over the last years, considerable effort has been invested into the rigorous derivation of effective models for suspensions. This has been quite successful regarding the derivation of effective fluid equations in models when only a snapshot in time is studied for a prescribed particle configuration or for certain toy models that do not take into account the effects of the fluid on the particle evolution (see e.g. \cite{HainesMazzucato12, NiethammerSchubert19, HillairetWu19, Gerard-VaretHillairet19, Gerard-VaretMecherbet20,
DuerinckxGloria20,
Gerard-VaretHoefer21, DuerinckxGloria21, Girodroux-Lavigne22, HoferLeocataMecherbet22}). 

Much less is known regarding the rigorous derivation of fully coupled models between the fluid and dispersed phase, although a number of such models have been proposed a long time ago and some of them have been studied extensively in the mathematical literature. 
These models typically consist of a transport or Fokker-Planck type equation for the particle density coupled to a fluid equation incorporating the effective rheological properties. 

The rigorous derivation of such models is so far limited to sedimenting spherical particles where the transport-Stokes system has been established in \cite{Hofer18MeanField, Mecherbet18} to leading order in the particle volume fraction. This system reads
\begin{align}
\left\{
\begin{array}{rcl}
    \partial_t \rho + (u + g) \cdot \nabla \rho = 0, \\
    -\Delta u + \nabla p = g \rho, \quad \dv u = 0,
    \end{array} \right.
\end{align}
where $\rho(t,x)$ is the number density of particles, $g \in \R^3$ is the constant gravitational acceleration. Here, the gravity is dominating over the change of the rheological properties of the fluid, which only appears as a correction to the next order in the  particle volume fraction $\phi$. More precisely, as was shown   in \cite{Hofer&Schubert}, a more accurate description is given by the system
\begin{align} \label{eq:transport.Stokes.Einstein} 
\left\{
\begin{array}{rcl}
    \partial_t \rho + (u + g) \cdot \nabla \rho = 0, \\
    -\dv((2 + 5 \phi \rho)D u) + \nabla p = g \rho, \quad \dv u = 0,
\end{array} \right.
\end{align}
where $D u = 1/2(\nabla u + (\nabla u)^T) $ denotes the symmetric gradient.

For non-spherical particles, the increase of
viscous stress depends on the particle orientations (see e.g. \cite{HillairetWu19} for a rigorous result in the stationary case). Moreover, elastic stresses are observed for non-spherical Brownian particles as well as active stresses for self-propelled particles, both depending on the particle orientation.
Therefore, it is necessary to consider models for the evolution of particle densities $f$ that include the particle orientation. In the simplest case of identical axisymmetric particles, the particle orientation can be modeled by a single vector $\xi \in \S^2$. The model corresponding to \eqref{eq:transport.Stokes.Einstein} then reads
\begin{align} \label{eq:transport.Stokes.Einstein.orientation}
\left\{
\begin{array}{rcl}
    \partial_t f + (u + g) \cdot \nabla f + \dv_{\xi} \left( \left(\frac 1 2 \curl u \wedge \xi + B P_{\xi^\perp}   D u \xi\right) f\right) &=& 0, \\
    -\Delta u  + \nabla p  - \dv (\phi M[f] Du)= g \rho, \quad \dv u &=& 0.
\end{array}\right.
\end{align}
Here, $P_{\xi^\perp}$ denotes the orthogonal projection in $\R^3$ to the subspace $\xi^\perp$ and $B$ is the Bretherton number that depends only on the particle shape ($B=0$ for spheres, $B=1$ in the limit of very elongated particles, see e.g. \cite[Section 3.8] {Graham}). Moreover $M[f]$ is a $4$-th order tensor depending on the particle shape and given in terms of moments of $f$.

A widely used model for Brownian suspensions of rod-like (Bretherton number $B=1$) particles at very small particle volume fraction $\phi$ is the so called
Doi model (see e.g. \cite{DoiEdwards88, constantin2005nonlinear,
helzel2006multiscale,
LionsMasmoudi07,
constantin2007regularity, constantin2008global,
ZhangZhang08, 
OttoTzavaras08,
constantin2009holder, constantin2010global, 
BaeTrivisa12, 
BaeTrivisa13, 
HelzelTzavaras17,  
La19}) that reads (in the absence of fluid inertia)
\begin{equation}
\label{Doi} \def\arraystretch{1.8}
\left\{
\begin{array}{l}
\displaystyle \partial_t f + \dv (u f) + \dv_{\xi} (P_{\xi^\perp} \nabla_x u \xi f) 
	= \frac 1 {\De} \Delta_\xi f + {\frac {\lambda_1} {\De}} \dv_x((\Id + \xi \otimes \xi) \nabla_x f), \\
\displaystyle	-  \Delta u + \nabla p - \dv \sigma = h, \qquad \dv u = 0\\
\displaystyle	\sigma = \sigma_v + \sigma_e = \phi M[f] Du +  {\frac {\lambda_2 \phi}{\De}} \int_{\S^2} (3 \xi \otimes \xi - \Id) f \dd \xi.
\end{array}
\right.
\end{equation}
Here, $\De$ is the Deborah number, $\lambda_1, \lambda_2$ are constants that depend on the particle shape and $h$ is some given source term.
Neglecting the effect of the fluid on the particles, the elastic stress $\sigma_e$ in the Doi model has been recently derived in \cite{HoferLeocataMecherbet22}.
There are very similar models for active suspensions (the Doi-Saintillan-Shelley model) and flexible particles, most prominently the FENE model.
Well-posedness and behavior of solutions to such models have been studied for example in
\cite{JLL02,
JLL04,
JLLO06,
LL07,
saintillan2008instabilities,
saintillan2008instabilities2,
LL11, 
chen2013global, Masmoudi13,
Saintillan2018review,  CotiZelatiDietertGerardVaret22, AlbrittonOhm22}.

\medskip

The main purpose of this paper is to draw attention to the limitations of such fully coupled models like \eqref{eq:transport.Stokes.Einstein.orientation} and \eqref{Doi}
regarding the modeling of the particle orientations through the term $\dv_{\xi} (P_{\xi^\perp} \nabla_x u \xi f)$ (respectively $\dv_{\xi} ( (1/2 \curl u \wedge \xi + B P_{\xi^\perp}   D u \xi) f)$ for $B \neq 1$).
This term derives from the change of orientation for the particles according to the gradient of the fluid velocity.
However, at least partially, this fluid velocity is arising as a perturbation flow due to the presence of the particles themselves that cause the viscous and elastic stresses $\sigma_v$ and $\sigma_e$. These perturbations are typically of order $\phi$.
On the microscopic level, the perturbed fluid velocity is very singular. More precisely, to leading order, it behaves like the sum of stresslets. At the $i$-th particle, it is given by
\begin{align}
    u^{\pert}_N(X_i) \approx \sum_{j \neq i} \nabla \Phi(X_i - X_j) : S_j
\end{align}
where the sum runs over all particles $j$ different from $i$, $\Phi$ is the fundamental solution of the Stokes equation  and $S_j$ are moments of stress induced at the $j$-th particle due to the rigidity constraint (and possibly activeness or flexibility).
Consequently, the change of orientation behaves like 
\begin{align}
    \dot \xi_i= P_{\xi_i^\perp} \xi_i \cdot \nabla u^{\pert}_N(X_i) \approx P_{\xi_i^\perp} \xi_i \sum_{j \neq i} \nabla^2 \Phi(X_i - X_j) : S_j
\end{align}
As $\Phi$ is homogeneous of degree $-1$, this behavior is too singular to expect the \enquote{naive} mean-field limit to be true that would lead to the term $\dv_{\xi} (P_{\xi^\perp} \nabla_x u \xi f)$ (respectively $\dv_{\xi} ( (1/2 \curl u \wedge \xi + B P_{\xi^\perp}   D u \xi) f)$ for $B \neq 1$) in the models \eqref{eq:transport.Stokes.Einstein.orientation} and \eqref{Doi}. These models therefore do not seem to describe correctly the behavior of the particle orientations to first order in the particle volume fraction $\phi$.
Instead, it seems necessary to include terms that depend on the $2$-point correlation function for an accurate description up to order $\phi$. 

This is reminiscent of the \emph{second} order correction in $\phi$ of the effective viscous stress $\sigma_v$ (see \cite{Gerard-VaretHillairet19, Gerard-VaretMecherbet20, DuerinckxGloria21}).
For the evolution of the particle orientation, this phenomenon already appears at the first order, since the particle orientations are only sensitive to the \emph{gradient} of the fluid velocity.

\medskip
In this paper, we will make these limitations rigorous for a toy model.
More precisely, we consider  a model example in which we show the non-existence of any mean field model that incorporates the change of orientations to the leading  order in the perturbation field of the fluid. This model example consists of a suspension of spherical particles in a background flow which is a linear extensional flow.

In a bounded domain $\Omega \subset \R^3$, the problem would read
    \begin{align}
    \left\{
\begin{array}{rcll}
        - \Delta u + \nabla p = 0, \quad \dv  u &=& 0 &\qquad \text{in } \Omega \setminus \bigcup_i B_i, \\
        u(x) &=& \phi^{-1} Ax &\qquad \text{on } \partial \Omega, \\
        D u &=&  0 &\qquad \text{in }  \bigcup_i B_i, \\
\displaystyle     \int_{\partial B_i} \sigma[u] n \dd S=   \int_{\partial B_i} (x-X_i) \wedge \sigma[u] n \dd S&=& 0 &\qquad \text{for all } 1 \leq i \leq N. 
     \end{array}\right.
    \end{align}
where $B_i = B_R(X_i)$ denote the spherical particles and $A \in \Sym_0(3)$ is a symmetric tracefree matrix. The rescaling with the volume fraction $\phi = N R^3$ is introduced in order to normalize the perturbation fluid velocity field $u_{per}$ induced by the particles.

For mathematical convenience, we consider the analogous problem on the torus $\T^3$.  
We attach (arbitrary) orientations $\xi_i \in \S^2$ to the spheres and show that no mean-field limit can exist by proving that for periodically arranged particles on $\Z^3$, the particles do not rotate at all, while for particles arranged periodically on $(2\Z) \times \Z^2$, the particles \emph{do} rotate with a fixed rate.

\subsection{Statement of the main results}

We will work on the toroidal domains
\begin{align}
    \T = (\R/\Z)^3, \qquad    \T_{L} = (\R/2L\Z)^3, \qquad 
    \bar \T_{L} = (\R/4L\Z) \times (\R/2L\Z)^2.
\end{align}
Furthermore we set
\begin{align}
    B = B_1(0), \qquad B_R= B_R(0),
\end{align}
and for definiteness
\begin{align}
    A = \begin{pmatrix}
    0&1&0\\1&0&0\\0&0&0
    \end{pmatrix}.
\end{align}

\begin{thm} \label{th:main}
    For $0 < R < 1/2$ and $\Omega = \T_1$ or  $\Omega = \bar\T_{1}$, let $u \in H^1(\Omega)$ be the unique weak solution to the problem
    \begin{align}\label{eq:uA}
    \left\{
\begin{array}{rcll}
        - \Delta u + \nabla p = 0, \quad \dv  u &=& 0 &\qquad \text{in } \Omega \setminus B_R, \\
        D u &=& A &\qquad \text{in }  B_R, \\
\displaystyle        \int_{\Omega} u \dd x &=&0,& \\
\displaystyle      \int_{\partial B_R} \sigma[u] n \dd S=   \int_{\partial B_R} x \wedge \sigma[u] n\dd S &=& 0.&
      \end{array} \right.
    \end{align}
    \begin{enumerate}[(i)]
        \item \label{it:symmetric.torus}
    If $\Omega = \T_1$, then $\curl u (0) = 0$.
    \item  \label{it:nonsymmetric.torus}
    If $\Omega = \bar\T_1$. 
    then there exists $\bar c > 0$ such that
    \begin{align} \label{angular.velocity}
        \left|\curl u(0) - \frac{R^3}{16} \bar c e_3\right| \leq C R^4 
    \end{align}
    for a constant $C$ independent of $R$.
    \end{enumerate}
\end{thm}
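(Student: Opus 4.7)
For part \eqref{it:symmetric.torus}, I would apply a symmetry argument. The linear maps $\tau\colon (x_1, x_2, x_3) \mapsto (x_1, x_2, -x_3)$ and $\sigma\colon (x_1, x_2, x_3) \mapsto (x_2, x_1, x_3)$ are orientation-reversing isometries of $\T_1$ that preserve $B_R$ and satisfy $\tau A \tau = A$ and $\sigma A \sigma = A$. For each $Q \in \{\tau, \sigma\}$, the transformed field $v(x) := Q u(Qx)$ solves the same problem \eqref{eq:uA}, hence $v = u$ by uniqueness. The curl is a pseudovector, so $\curl v(0) = (\det Q)\, Q\, \curl u(0)$; combined with $v = u$ this gives $\curl u(0) = (\det Q)\, Q\, \curl u(0)$. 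Applying this identity with $Q = \tau$ yields $(\curl u(0))_1 = (\curl u(0))_2 = 0$, and with $Q = \sigma$ yields $(\curl u(0))_3 = 0$.

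For part \eqref{it:nonsymmetric.torus}, the reflection $\tau$ remains a symmetry of $\bar\T_1$ and again gives $(\curl u(0))_1 = (\curl u(0))_2 = 0$, but the swap $\sigma$ is broken by the asymmetric aspect ratio, so the $e_3$-component must be computed directly. My approach is matched asymptotics in the regime $R \to 0$. Inside $B_R$ the constraint $Du = A$ forces $u(x) = (A + W)x + U$ with $W$ skew, so the angular velocity of the particle is $\omega = \tfrac12 \curl u(0)$. Outside $B_R$, write $u = u_{\mathrm{free}} + v$, where $u_{\mathrm{free}}$ is the explicit free-space Stokes solution on $\R^3 \setminus B_R$ matching the trace of $u$ on $\partial B_R$ and decaying at infinity (a stresslet of strength $c_S R^3 A$ plus a rotlet of strength $c_\Omega R^3 \omega$, up to higher multipoles), and $v$ is the smooth periodic remainder accounting for image contributions. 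The no-torque condition together with the boundary trace on $\partial B_R$ yields a toroidal Fax\'en-type identity $\omega = \tfrac12 \curl v(0) + \O(R\omega)$. Since $v$ is produced to leading order by the smoothed periodic Oseen tensor $H_{\bar\T_1} := G_{\bar\T_1} - G_{\R^3}$ applied to the stresslet $c_S R^3 A$ at the origin, this gives
\begin{align}
    \curl u(0) = R^3\, \gamma\, e_3 + \O(R^4),
\end{align}
where $\gamma$ is a specific component of $\partial^2 H_{\bar\T_1}(0) : A$ antisymmetrized in the appropriate pair of indices; the constant $\bar c$ in \eqref{angular.velocity} equals $16\gamma$.

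The principal obstacle is to prove $\bar c > 0$. Using the Fourier representation of the periodic Oseen tensor on the dual lattice $\Lambda^* = \tfrac14\Z \times \tfrac12\Z \times \tfrac12\Z$, the quantity $\gamma$ becomes an absolutely convergent lattice sum (after subtraction of the free-space singularity). Part \eqref{it:symmetric.torus} asserts that the analogous sum over the cubic dual lattice $\tfrac12\Z^3$ vanishes, so subtracting isolates the contribution from the additional modes with $k_1 \in \tfrac14\Z \setminus \tfrac12\Z$; these terms should carry a definite sign once the stresslet kernel $(I - \hat k \otimes \hat k) : A$ is analyzed and paired with its $k_1 \mapsto -k_1$ counterpart. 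The quantitative $\O(R^4)$ error bound in \eqref{angular.velocity} should follow from standard multipole error estimates for the exterior Stokes problem, leveraging the smoothness of $H_{\bar\T_1}$ away from $0$.
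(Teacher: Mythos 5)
Your part (i) is correct and is essentially the paper's argument: the paper uses rotations $S$ leaving the torus invariant with $S^\top AS=-A$ together with linearity in $A$, while you use reflections with $QAQ=A$ together with the pseudovector transformation of the curl; these are equivalent symmetry arguments, and your reflection $\tau$ also gives the vanishing of the first two components on $\bar\T_1$, as in the paper.

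For part (ii) there is a genuine gap at exactly the point you flag as the principal obstacle: the strict positivity of $\bar c$. Your reduction of $\curl u(0)$ to a lattice quantity built from the periodic-minus-free-space Oseen tensor is in line with the paper (which works in real space: it superposes single-sphere stresslet solutions with subtracted cell means and reduces everything to the lattice sum of $\tfrac{y_1^2-y_2^2}{|y|^5}$). But the mechanism you propose for the sign does not work as stated. Contracting the stresslet with the curl at the origin, the Fourier summand is proportional to $(k_1^2-k_2^2)/|k|^2$: it is already even in $k_1$, so pairing $k_1\mapsto -k_1$ yields nothing; on the extra modes $k_1\in\tfrac14\Z\setminus\tfrac12\Z$ the terms take both signs (negative whenever $|k_2|>|k_1|$); and the sum is not absolutely convergent, so an Ewald-type regularization is needed before any sign can even be discussed. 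There is no termwise positivity, and indeed the paper does not find a soft argument either: it proves $\bar c>0$ in Lemma~\ref{le:sign} by a quantitative truncation bound on the real-space lattice sum (explicit error $<2.1$ after restricting to $|y|_\infty\le 70$) combined with a computer-assisted evaluation of the remaining finite sum ($\le -2.25$). Without such an argument your plan only yields $|\curl u(0)-R^3\gamma e_3|\le CR^4$ for some constant $\gamma$ that could a priori vanish, which is not enough for Corollary~\ref{cor:main}. A secondary, smaller gap: the $O(R^4)$ error is asserted via ``standard multipole estimates,'' but quantifying it requires work — in the paper this is Proposition~\ref{pro:u.bar.u} (an energy comparison with the explicit periodic superposition, whose absolute convergence itself needs the subtracted cell means) together with Lemma~\ref{le:curl.B_R.B_1}, which transfers the average of the curl from the particle to a ball of radius $1/R$; averaging only over the particle would give an error of the same order $R^3$ as the main term. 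The exact Fax\'en torque law can replace part of this, but the control of the multiple-scattering remainder and of the difference between $u$ and the image-system approximation still has to be carried out.
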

Observe that the factor $16$ in \eqref{angular.velocity} corresponds to the volume of $\bar \T_1$. 

As a consequence of Theorem \ref{th:main} we show the negative result stated in Corollary \ref{cor:main} below, that  we outlined in the introduction. We will use the following notation.
For $N\in \N$, which we think of as the number of particles in a unit cell, we denote by 
\begin{align}
\phi = NR^3
\end{align}
the volume fraction of the particles. Both $R$ and $\phi$ may implicitly depend on $N$. We will make this dependence explicit by a subscript $N$ wherever we feel it is necessary for clarity.
For $R > 0$ and $N \in \N$, let $X_i \in \T$, $1 \leq i \leq N$ be such that
\begin{align} \label{dmin}
    \dmin = \min_{i \neq j} |X_i - X_j| > 2 R.
\end{align}
This ensures that the balls
\begin{align}
     B_i = B_R(X_i)
\end{align}
do not intersect nor touch each other.
Moreover, for $1 \leq i \leq N$, let $\xi_i^0 \in \S^2$. The associated initial empirical density $f_N^0 \in \P(\R^3 \times \S^2)$ is given by
\begin{align}
    f_N^0 = \frac 1 N \sum_i \delta_{X_i} \otimes \delta_{\xi_i^0}.
\end{align}
Consider the dynamics
\begin{align} \label{xi}
    \frac {\dd}{\dd t} \xi_i =  \omega_i\wedge \xi_i 
\end{align}
where $\omega_i = \tfrac 12\curl u(X_i)$ for the solution $u \in \dot H^1(\T)$ to the problem
    \begin{align} \label{u:micro}
    \left\{
\begin{array}{rcll}
        - \Delta u + \nabla p = 0, \quad \dv  u &=& 0 &\qquad  \text{in } \T \setminus \bigcup_i B_i, \\
        D u &=& \phi^{-1} A &\qquad \text{in }  \bigcup_i B_i, \\
 \displaystyle           \int_{\T} u \dd x&=&0,& \\
\displaystyle      \int_{\partial B_i} \sigma[u] n \dd S =   \int_{\partial B_i} (x-X_i) \wedge \sigma[u] n\dd S &=& 0 &\qquad \text{for all } 1 \leq i \leq N. 
    \end{array} \right.
    \end{align}
Notice that $u$ is the normalized perturbation field induced by the particles with respect to the background flow $-Ax$. We then write 
\begin{align}\label{eq:def_f_N}
    f_N(t) := \frac 1 N \sum_i \delta_{X_i} \otimes \delta_{\xi_i(t)}.
\end{align}

In the following, we denote by $W_p$, $p \in [1,\infty]$ the usual $p$-Wasserstein distance (cf. for example \cite{Santambrogio15}) on the space of probability measures $f \in \P(\T\times \S^2)$. We recall that  $W_p \leq W_q$ for $p \leq q$ and that $W_1$ metrizes weak convergence of measures.
\begin{cor} \label{cor:main}
    For all sequences $R_N \to 0$ with $\phi_N \to 0$, there exist constants $c,T>0$ and $(X_1,\cdots,X_N) \in \T^{N}$ and $(\xi_1^0,\cdots,\xi_N^0) \in (\S^2)^N$, $N\in \N$, 
    such that the associated empirical measures $f_N\in C([0,\infty);\P(\T \times \S^2))$ defined by \eqref{eq:def_f_N} and \eqref{xi} satisfy the following properties.
    \begin{enumerate}[(i)]
        \item \label{it:min.dist} With $\dmin$ defined as in \eqref{dmin}
    \begin{align}
        \dmin \geq c N^{-1/3}.
    \end{align} 
    \item  \label{it:initial.convergence} There exists $f_0 \in \P(\T \times \S^2) \cap C^\infty(\T \times \S^2)$ such that $W_\infty(f_N^0,f^0) \to 0$.
    \item \label{it:accumulation.points} There exist at least two distinct accumulation points of $f_N$. More precisely, there exist subsequences $f_{N_k}$, 
    $f_{{\bar N}_k}$ and $f, \bar f \in C^\infty([0,\infty);\T \times \S^2)$ which satisfy
    \begin{align}
    \sup_{t \in [0,T]} W_\infty(f_{N_k}(t),f(t)) + W_\infty(f_{\bar N_k}(t),\bar f(t)) \to 0,  \label{eq:accumulation.points}\\
         W_1(f(t),\bar f(t)) \geq c t \quad \text{for all } t \leq T.  \label{W_1(f,bar.f)}
    \end{align}
    \end{enumerate}
\end{cor}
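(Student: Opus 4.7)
The plan is to construct two lattice configurations on $\T$ which, after exploiting the lattice periodicity of \eqref{u:micro} and rescaling, reduce to the single-particle problem of Theorem~\ref{th:main} on $\T_1$ (giving no rotation) and on $\bar\T_1$ (giving a non-trivial rotation), and then to choose the initial orientations so that both families of empirical measures converge to the same smooth $f^0$ at $t=0$ but produce distinct accumulation points for $t>0$.

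For $M\in\N$, configuration \emph{(a)} consists of $N=M^3$ particles at $X_k=k/M$, $k\in\{0,\ldots,M-1\}^3$, and configuration \emph{(b)} consists of $N=4M^3$ particles at $X_k=(k_1/M,k_2/(2M),k_3/(2M))$. One has $\dmin=1/M$ in (a) and $1/(2M)$ in (b), both $\gtrsim N^{-1/3}$, and the non-overlap $2R_N<\dmin$ holds eventually since $MR_N\to 0$. By translation invariance, $u$ descends to a single-particle problem on the quotient sub-torus, whose dimensions are proportional to those of $\T_1$ in (a) and $\bar\T_1$ in (b). The rescalings $\tilde u(y)=2M\phi_N u(x)$ with $y=2Mx$ in (a), resp.\ $\tilde u(y)=4M\phi_N u(x)$ with $y=4Mx$ in (b), transform \eqref{u:micro} into \eqref{eq:uA} on $\T_1$, resp.\ $\bar\T_1$, with radius $\tilde R=2MR_N$, resp.\ $\tilde R=4MR_N$. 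Theorem~\ref{th:main}, combined with the identity $\tilde R^3/16=4(MR_N)^3=\phi_N$ in case (b), yields $\curl u(0)=0$ in case (a) and in case (b)
\begin{align*}
|\curl u(0)-\bar c\,e_3| = \phi_N^{-1}\bigl|\curl\tilde u(0)-\phi_N\bar c\,e_3\bigr| \leq CMR_N = O(\phi_N^{1/3}).
\end{align*}
By translation invariance all $\omega_i$ coincide, so $\xi_i(t)\equiv\xi_i^0$ in (a), whereas in (b) every $\xi_i(t)=\rho_N(t)\xi_i^0$ for a common rotation $\rho_N(t)$ converging as $N\to\infty$ to the rotation $R(t)$ of angle $t\bar c/2$ about $e_3$. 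For $N\notin\{M^3\}\cup\{4M^3\}$ we use any configuration satisfying (\ref{it:min.dist}) and contributing to (\ref{it:initial.convergence}); these indices are irrelevant for (\ref{it:accumulation.points}).

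For the initial data, fix a smooth density $h$ on $\S^2$ not invariant under rotations about $e_3$, e.g.\ $h(\xi)=(4\pi)^{-1}(1+\xi_1/2)$, and set $f^0(x,\xi)=h(\xi)$. For each lattice, partition the particles into macro-cells of $\asymp K_N^3$ particles with $K_N\to\infty$ and $K_N/M\to 0$, and within each macro-cell assign as orientations a $W_\infty$-quasi-optimal quantization of $h\dd\xi$ of size $K_N^3$; balancing the spatial error $K_N/M$ against the $\S^2$-quantization error $O(K_N^{-3/2})$ gives $W_\infty(f_N^0,f^0)\to 0$. Along the subsequences $N_k=k^3$ and $\bar N_k=4k^3$, using that rotations are $\S^2$-isometries and $\rho_{\bar N_k}(t)\to R(t)$, the evolved measures converge to $f(t)=f^0$ and $\bar f(t)=(\mathrm{id}\otimes R(t))_* f^0$, both smooth in $(t,x,\xi)$. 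Kantorovich--Rubinstein duality with the $1$-Lipschitz test function $\varphi(x,\xi)=\xi_2$ then gives
\begin{align*}
W_1(f(t),\bar f(t)) \geq \Bigl|\int_{\S^2}\bigl(\xi_2-(R(t)\xi)_2\bigr)h(\xi)\dd\xi\Bigr| = \tfrac 16 \sin(t\bar c/2) \geq c\,t
\end{align*}
for $t\in[0,T]$ with $T$ small, since $\int\xi_1 h\dd\xi=1/6\neq 0$.

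The main technical obstacle is the initial-data step: one must produce empirical measures that converge in the strong metric $W_\infty$ jointly in position and orientation despite the rigidly constrained spatial lattice, coherently for both lattice families so that they share the initial limit $f^0$. The rescaling itself is routine once the aspect ratios of the quotient sub-tori are matched with those of $\T_1$ and $\bar\T_1$.
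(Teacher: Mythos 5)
Your proposal is correct and follows essentially the same route as the paper: the same two lattice configurations ($k^3$ cubic and $4k^3$ with aspect ratio $2{:}1{:}1$), reduction by periodicity and rescaling to the single-particle problems of Theorem~\ref{th:main} on $\T_1$ and $\bar\T_1$, initial orientations approximating a smooth $h$ in $W_\infty$, and a Kantorovich--Rubinstein duality argument with a linear test function to obtain the lower bound \eqref{W_1(f,bar.f)}. The only deviations are cosmetic (explicit $h=(4\pi)^{-1}(1+\xi_1/2)$ with the test function $\xi_2$ and an exact $\sin$ computation instead of the paper's $\supp h\subset\{\xi_2>0\}$ with $\varphi=\xi_1$ and a $g'(0)>0$ argument, and a deterministic quantization instead of i.i.d.\ sampling for the initial data), and your error rate $O(MR_N)=O(\phi_N^{1/3})$ is in fact the sharper bookkeeping.
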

Several remarks are in order.
\begin{itemize}
\item  Observe that the corollary indeed shows that no general mean-field model can describe the effective behavior of the microscopic system \eqref{xi}--\eqref{u:micro} since there is a sequence $f_N$ that on the one hand converges at the initial time to some $f_0$ but that on the other hand has at least two distinct accumulation points for $0 < t \leq T$.
In particular, the \enquote{naive} mean-field limit
\begin{align} \label{eq:momentum} \def\arraystretch{1.8}
    \left\{
\begin{array}{rcl}
\displaystyle \partial_t f + \dv_{\xi} \left(\frac 1 2\curl u \wedge \xi f\right) 
	&=& 0, \\
\displaystyle	-  \Delta u + \nabla p  =  - 5 \dv \left(A  \int_{S^2} f \dd \xi \right) , \quad  \dv u &=& 0
	\end{array} \right.
\end{align}
cannot hold true. Here, the factor $5$ arises as the relation between the strain and stress of an isolated sphere in an infinite fluid, cf. \eqref{eq:transport.Stokes.Einstein}.
Note that the momentum equation in \eqref{eq:momentum} can be obtained from $-\dv((2 + 5 \phi \rho )Dv) + \nabla p   $ from the ansatz $v(x) = - Ax + \phi u(x)$ upon taking $\phi \to 0$.

\item  Condition \ref{it:min.dist} ensures that the non-convergence is not caused by particle clusters but appears for well-separated particles. 

\item An adapted version of the statement remains true  when one takes into account the time-evolution of the particle positions according to 
\begin{align}
    \frac{\dd}{\dd t} X_i = u(X_i).
\end{align}
Indeed, as pointed out above, regarding the translations, the \enquote{naive} mean-field limit does hold, at least as long as the particles remain well-separated (cf. \cite{Hofer&Schubert}). In the proof of the corollary, we only consider distributions of particles which are periodic in space. Since periodicity is preserved under the dynamics, such clustering cannot occur.
Nevertheless, we  restrict ourselves to the case of fixed particle centers for the sake of the simplicity of the presentation.

\item The regularity of the limiting density $f$ \emph{strengthens} the statement. Indeed the simplest approach would be to consider particles that all have the same orientation, which leads to a delta distribution in orientation for $f$. 
\end{itemize}
\section{Proof of Corollary \ref{cor:main}}

Let $R_N \to 0$ with $\phi_N \to 0$ be given.

Let $f_0 = h(\xi)$ for some $h(\xi) \in \P(\S^2) \cap C^\infty(\S^2)$ that will be chosen later.
For $k \in \N$ let $N_k = (k)^3$ and $\bar N_k = 4 k^3$ and let
$\{X_i\}_{i =1}^{N_k} = (\Z/k)^3\subset \T$
and $\{\bar X_i\}_{i =1}^{\bar N_k} = (\Z/k) \times (\Z/2k)^2\subset \T$.  Define
\begin{align}
    f_{N_k}^0 = \frac{1}{N_k} \sum_i \delta_{X_i} \otimes \delta_{\xi_i^0}, \\
    f_{\bar N_k}^0 = \frac{1}{\bar{N}_k} \sum_i \delta_{\bar X_i} \otimes \delta_{\bar \xi_i^0}
\end{align}
where the initial orientations $\xi_i^0$ and $\bar \xi_i^0$ are chosen in such a way that both $W_\infty(f_{N_k}^0,f^0) \to 0$ and  $W_\infty(f_{\bar N_k}^0,f^0) \to 0$ (for example by taking samples  of initial  distributions $\xi_i^0$ and $\bar \xi_i^0$ which are i.i.d. distributed with law $h$).

By a suitable choice of $f_N^0$ for $N \not \in \{k^3 : k \in \N\} \cup \{4 k^3 : k \in \N\}$, we can ensure that items \ref{it:min.dist} and \ref{it:initial.convergence} are satisfied.
We will show that  item \ref{it:accumulation.points} holds true with $f(t,\cdot) := 1 \otimes h$ and $\bar f(t) :=1 \otimes (h\circ e^{-\frac{\bar c} 2 M t})$,
where $\bar c$ is the constant from Theorem \ref{th:main} and
\begin{align}
    M = \begin{pmatrix}
    0&-1&0\\1&0&0\\0&0&0
    \end{pmatrix}.
\end{align}
is the unique skew-symmetric matrix satisfying $M v = e_3 \wedge v$ for all $v \in \R^3$. 

In order to show \eqref{eq:accumulation.points}, it suffices to prove that
\begin{align}
    \dot \xi_i = 0 \qquad \text{ for all } 1 \leq i \leq N_k, \\
    \dot {\bar \xi}_i = \frac {\bar c} 2  e_3 \wedge \xi_i + O(R_N) \qquad \text{ for all } 1 \leq i \leq \bar N_k.
\end{align}
This is an immediate consequence of Theorem \ref{th:main}.

It remains to prove \eqref{W_1(f,bar.f)}. We use the well-known characterization (cf. \cite[Equation (3.1)]{Santambrogio15})
\begin{align}
    W_1(f(t),\bar f(t)) = \sup\left\{ \int_{\T \times \S^2} (f(t) - \bar f(t)) \varphi \dd \xi \dd x : \varphi \colon \T \times \S^2 \to \R \text{ is } 1-Lipschitz \right\}.
\end{align}
Choosing $\varphi(x,\xi) = \xi_1$ yields
\begin{align}
    W_1(f(t),\bar f(t)) \geq \int_{\S^2} \left(\xi_1 - \left(e^{ \frac {\bar c} 2M t} \xi\right)_1 \right) h(\xi) \dd \xi =: g(t) 
\end{align}
We observe that
\begin{align}
    g'(0) =  \int_{\S^2} \frac {\bar c} 2 \xi_2 h(\xi) \dd \xi.
\end{align}
Since $h \in \P(S^2) \cap C^\infty(S^2)$ was arbitrary, we may choose $h$ in such a way that $g'(0) > 0$ (e.g. by taking $h$ with  $\supp h \subset \{\xi_2 > 0\}$). Then \eqref{W_1(f,bar.f)} holds.



\section{Proof of Theorem \ref{th:main}}
\subsection{Proof of the first item}\label{subsec:item1}
For this subsection, denote by $u_A$ the solution to \eqref{eq:uA} for $\Omega=\T_1$, i.e.
 \begin{align*}
     \left\{
\begin{array}{rcll}
        - \Delta u_A + \nabla p = 0, \quad \dv  u_A &=& 0 &\qquad \text{in }  \T_1 \setminus B_R, \\
        D u_A &=& A &\qquad \text{in }  B_R, \\
        \int_{ \T_1} u_A &=&0,& \\
      \int_{\partial B_R} \sigma[u_A] n \dd S=   \int_{\partial B_R} x \wedge \sigma[u_A] n \dd S&=& 0.&
      \end{array} \right.
    \end{align*}
Let $S\in SO(3)$ be any rotation matrix that leaves the torus $\T_1$ invariant. Then, we have 
$$
u_{S^\top A S}(x)=S^\top u_A(Sx),\qquad  u_{-A}(x)=-u_A(x).
$$
Therefore, denoting by $\omega[u_A]:=\frac 12\curl u_A(0)$, the angular velocity of the particle associated with $u_A$, one can show that
\begin{equation}\label{eq:relation_angular_velocity_R}
\omega[u_{S^\top A S}]=S^\top \omega[u_A],\qquad  \omega[u_{-A}]=-\omega[u_A].
\end{equation}
Taking $S= S_k$, $k=1,2,3$,  
\begin{align}  \label{R}
 S_1= \begin{pmatrix}1& 0&0 \\0&-1&0\\0&0&-1 \end{pmatrix}, \qquad  S_2 = \begin{pmatrix}-1& 0&0 \\0&1&0\\0&0&-1 \end{pmatrix}, \qquad  S_3 = \begin{pmatrix}0& -1&0 \\1&0&0\\0&0&1 \end{pmatrix}.
\end{align}
which all have the property $S_k^T A S_k = - A$, we deduce from \eqref{eq:relation_angular_velocity_R}  that the three components of $\omega[A]$ are vanishing.

\subsection{Proof of the second item}

Let now $u_A$ be the solution to \eqref{eq:uA} with $\Omega=\bar\T_1$. The fact that the first two components of $\omega[u_A]:=\tfrac 12 \curl u_A(0)$ vanish can be shown by the same argument as in Subsection~\ref{subsec:item1}, considering  $S_1$ and $S_2$ in \eqref{R} that leave $\bar\T_1$ invariant.

For convenience, we consider the rescaled torus $ \bar\T_{1/R}$ instead of $ \bar\T_{1}$ and set $L=1/R$. More precisely, we consider $u$ to be the solution to 
 \begin{align}\label{eq:u}
     \left\{
\begin{array}{rcll}
        - \Delta u + \nabla p = 0 , \quad    \dv  u &=& 0 &\qquad \text{in } \bar\T_{L} \setminus B, \\
        D u &=& A &\qquad \text{in }  B, \\
  \displaystyle      \int_{\bar\T_{L}} u \dd x&=&0, \\
  \displaystyle       \int_{\partial B} \sigma[u] n \dd S=   \int_{\partial B} x \wedge \sigma[u] n \dd S&=& 0.
\end{array}\right.
\end{align}
By rescaling it remains to prove the following claim:
\begin{align}
        |\curl u(0) + \frac{1}{L^3}\bar c e_3| \leq C L^{-4}. 
\end{align}
The proof is based on a good explicit approximation of $u$. Here it is useful to think of $B$ not as a \emph{single} particle in the torus but as one of infinitely many periodically distributed particles in $\R^3$. Consequently we will in the following consider functions that a priori are defined on $\R^3$ even if they \emph{turn out} to be periodic and can thus be considered as functions defined on $\bar \T_L$. If the volume fraction of the particles is small, the flow field $u$ is well approximated 
by the superposition of the single particle solutions $w$ of the problem

\begin{align}\label{eq:w}
     \left\{
\begin{array}{rcll}
        - \Delta w + \nabla p = 0, \quad  \dv  w &=& 0  &\qquad \text{in } \R^3 \setminus B,\\
        D w &=& A &\qquad \text{in }  B, \\
  \displaystyle         \int_{\partial B} \sigma[w] n \dd S=   \int_{\partial B} x \wedge \sigma[w] n \dd S&=& 0.
\end{array}\right.
\end{align}
We emphasize that $w(x)=Ax$ in $B$ and we have (see e.g. \cite[Eq. (1.11)]{NiethammerSchubert19})
\begin{equation}\label{eq:approximation_w}
w(x)=-\frac{20\pi}{3}  \nabla \Phi(x):A +  R[A](x)
\end{equation}
where $R[A](x)$ is homogeneous of degree $-4$ and $\Phi$ is the fundamental solution of the Stokes equations, i.e.
\begin{align}
    \Phi(x) &= \frac 1 {8 \pi} \left( \frac{\Id}{|x|} + \frac{x \otimes x}{|x|^3} \right), \\
     (\nabla \Phi(x):A)_i := \partial_k \Phi_{ji}(x) A_{jk} &= - \frac 3 {8 \pi} \frac{x_i x_j x_k A_{jk}}{|x|^5}.\label{eq:gradphi}
\end{align}
We set 
\begin{align}\label{eq:Lambda}
\Lambda_L= \{(y_1,y_2,y_3),\, y_1 \in 4 L\Z, y_2 \in 2 L \Z, y_3 \in 2 L \Z \}
\end{align}
and define the superposition of single particle solutions $\tilde u $ for $x\in  \R^3$ by
\begin{align}\label{eq:ubar}
\begin{aligned}
\bar{u}(x)=\underset{y \in \Lambda_L}{\sum}& \left( w(x-y)+\frac{20\pi}{3} \fint_{Q_y} \nabla \Phi(x-z):A \dd z\right.\\
&\left. - \fint_{Q_y} \left\{w(x'-y)+\frac{20\pi}{3} \fint_{Q_y} \nabla \Phi(x'-z):A \dd z \right\} \dd x'  \right) 
\end{aligned}
\end{align}
with $Q_y= y + [-2L,2L] \times [-L,L]^2$. Here we subtract iterated means (the sum of which formally vanishes) in order to make the sum absolutely convergent. 

The approximation $\bar u$ is convenient because on the one hand we can profit from the fact that its building blocks satisfy the PDE \eqref{eq:w} and hence $\bar u$ itself also satisfies a PDE (see Proposition~\ref{prop_bar_u} below) in order to compare it to $u$ (see Proposition~\ref{pro:u.bar.u} below).  On the other hand $w$ and hence $\bar u$ are explicit and, by Lemma~\ref{le:u.tilde.u.bar} below, it is close (inside of $Q_0$) to the following even simpler explicit approximation (shifted by $w$)
\begin{align} \label{def:u.tilde}
\begin{aligned}
\tilde {u}(x)=&\frac{20\pi}{3} \fint_{Q_0} \nabla \Phi(x-z):A \dd z\\
&-\frac{20\pi}{3}  \underset{y \in \Lambda_L \setminus\{0\}}{\sum} \left(\nabla \Phi(x-y)-  \fint_{Q_y} \nabla \Phi(x-z)\dd z \right.\\
&\qquad\qquad\qquad\left.- \fint_{Q_y} \left\{\nabla \Phi(x'-y) -  \fint_{Q_y} \nabla \Phi(x'-z) \dd z \right\} \dd x'  \right):A,
\end{aligned}
\end{align}
which makes explicit lattice computations accessible.
\begin{prop}\label{prop_bar_u}
The functions $\bar u$ from \eqref{eq:ubar} and $\tilde u$ from \eqref{def:u.tilde} are well-defined and satisfy $\bar u, \tilde u \in W^{1,\infty}(Q_0)$.  Moreover, $\bar u$ is periodic and is a weak solution to
 \begin{align} \label{eq:baru}
      \left\{
\begin{array}{rcl}
        - \Delta \bar u + \nabla = 0, \quad \dv  \bar{u} &=& 0  \qquad \text{in } \bar\T_{L} \setminus B,\\
    \displaystyle       \int_{\bar\T_{L}} \bar u \dd x&=&0, \\
  \displaystyle       \int_{\partial B} \sigma[\bar u] n\dd S =   \int_{\partial B} x \wedge \sigma[\bar u] n \dd S&=& 0.
    \end{array}\right.
    \end{align}
\end{prop}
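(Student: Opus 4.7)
The plan is to verify the three claims in sequence: (a) the series defining $\bar u$ and $\tilde u$ converge in $W^{1,\infty}(Q_0)$; (b) $\bar u$ is $\Lambda_L$-periodic; (c) $\bar u$ satisfies \eqref{eq:baru} weakly.

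For (a), I would insert the expansion \eqref{eq:approximation_w} into \eqref{eq:ubar}, so that the $y$-summand decomposes as $R[A](x-y) - \tfrac{20\pi}{3}(\nabla\Phi(x-y):A - \fint_{Q_y}\nabla\Phi(x-z):A\,dz)$, minus the $x'$-average. The first term is $O(|y|^{-4})$, summable over the three-dimensional lattice $\Lambda_L\setminus\{0\}$. For the second, the fact that $Q_y$ is symmetric about $y$ kills the first Taylor coefficient in $z-y$, and the quadratic remainder involves $\nabla^3\Phi$ (homogeneous of degree $-4$), giving an $O(L^2|y|^{-4})$ bound, again summable. Differentiation in $x$ costs one extra power of $|y|^{-1}$, balanced by the higher degree of $\nabla^4\Phi$, so the gradient series is absolutely summable as well. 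By the odd symmetry of $w$ and $\nabla\Phi$ under $x\mapsto -x$, each subtracted cell-average in fact vanishes and does not affect the estimates; these subtractions are present in \eqref{eq:ubar}, \eqref{def:u.tilde} for systematic bookkeeping. The same analysis applied to \eqref{def:u.tilde} yields $\tilde u \in W^{1,\infty}(Q_0)$.

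For (b), fix $y_0 \in \Lambda_L$ and reindex the sum in $\bar u(x+y_0)$ by $y \mapsto y+y_0$; together with the substitutions $z\mapsto z+y_0$ and $x'\mapsto x'+y_0$ in the inner integrals and the identity $Q_{y+y_0} = Q_y + y_0$, each summand is mapped to the corresponding summand of $\bar u(x)$. Absolute convergence legitimates this rearrangement. For (c), the key observation is that every summand is already a homogeneous Stokes field in $Q_0\setminus B$: $w(\cdot-y)$ solves Stokes in $\R^3\setminus B_R(y)$ by \eqref{eq:w} (for $y\neq 0$ this contains $Q_0$ since $B_R(y)\cap Q_0=\emptyset$, and for $y=0$ it gives Stokes on $Q_0\setminus B$), while $\fint_{Q_y}\nabla\Phi(x-z):A\,dz$ is a smooth Stokes field separately in the interior and exterior of $Q_y$, because $\nabla\Phi(\cdot-z):A$ solves Stokes for $x\neq z$ and averaging with the constant weight $1/|Q_y|$ leaves only a distributional surface source on $\partial Q_y$. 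The $W^{1,\infty}$-convergence from Step~(a) lets us pass termwise to the limit in the weak formulation, yielding $-\Delta\bar u + \nabla p = 0$ and $\dv\bar u = 0$ in the interior of $Q_0\setminus B$. On any face shared by $Q_0$ with an adjacent cell $Q_y$, the surface forcings from the two sides carry opposite outward normals and identical weights $1/|Q_0|=1/|Q_y|$, and therefore cancel exactly, extending the Stokes equation to all of $\bar\T_L\setminus B$ under the periodic identification. For the vanishing force and torque on $\partial B$, only the $y=0$ piece $w$ is non-smooth in $B$ and satisfies these by \eqref{eq:w}; every other summand is a smooth Stokes field on a neighborhood of $B$, so $\dv\sigma=0$ and the divergence theorem give $\int_{\partial B}\sigma n\,dS = \int_{\partial B} x\wedge \sigma n\,dS = 0$. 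The zero-mean condition follows from the oddness of $\bar u$ under $x\mapsto -x$, inherited from $w(-x)=-w(x)$ (a symmetry of \eqref{eq:w}) and $\nabla\Phi(-x)=-\nabla\Phi(x)$, combined with the reindexings $y\mapsto -y$ and $z\mapsto -z$ that leave $\Lambda_L$ and the integration domains invariant ($-Q_y=Q_{-y}$).

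The principal obstacle is the lattice summability in Step~(a): the raw stresslet series $\sum_{y\neq 0} w(x-y)$ decays only as $|y|^{-2}$ and is at best conditionally convergent in $\R^3$. The cell-averaged stresslet corrections in \eqref{eq:ubar}, \eqref{def:u.tilde} are engineered precisely to cancel this leading decay term by term via discrete Taylor remainders, upgrading convergence to the absolutely summable rate $O(L^2|y|^{-4})$. Simultaneously tracking that these corrections preserve the Stokes equation pointwise and that the associated distributional forcings on cell faces cancel correctly across the periodization constitutes the delicate core of the argument.
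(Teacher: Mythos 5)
Your overall strategy coincides with the paper's: you insert \eqref{eq:approximation_w}, use the central symmetry of the cells $Q_y$ about $y$ to upgrade the decay of the far summands to the summable rate $|x-y|^{-4}$, note that the subtracted means vanish by oddness of $w$ and $\nabla\Phi:A$, obtain periodicity by reindexing, and verify the Stokes system by observing that each cell-averaged stresslet solves Stokes with source proportional to $\div(\chi_{Q_y}A)$, whose surface parts cancel across shared faces under the periodization; the force- and torque-free conditions and the zero mean are argued essentially as in the paper. All of this matches the paper's proof of Proposition~\ref{prop_bar_u}.

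The gap is in the claim $\bar u,\tilde u\in W^{1,\infty}(Q_0)$ for the finitely many \emph{near} summands, in particular $y=0$. Your Taylor/homogeneity bounds (``differentiation in $x$ costs one extra power, balanced by $\nabla^4\Phi$'') apply only when $|x-y|\gg L$; for the cell containing $x$, and for its neighbours when $x$ approaches a shared face, the formal gradient $\fint_{Q_y}\nabla^2\Phi(x-z):A\dd z$ is not an absolutely convergent integral, since $\nabla^2\Phi$ is homogeneous of degree $-3$, so the termwise argument cannot be run there as stated. One must first give this gradient a meaning and then a bound that is uniform up to $\partial Q_0$: the paper does this by grouping the $27$ neighbouring cells into $\tilde Q_0=[-4L,4L]\times[-2L,2L]^2$ and integrating by parts, $\int_{\tilde Q_0}\nabla^2\Phi(x-z):A\dd z=-\int_{\partial\tilde Q_0}\nabla\Phi(x-z)(An)\dd S_z$, which is uniformly bounded for $x\in Q_0$ because $x$ stays at distance at least $L$ from $\partial\tilde Q_0$. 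If instead you insist on single cells, the boundary integrals $\int_{\partial Q_y}\nabla\Phi(x-z)(An)\dd S_z$ are a priori only controlled logarithmically in $\dist(x,\partial Q_y)$, so an additional cancellation or grouping argument is required before the $W^{1,\infty}(Q_0)$ bound you rely on (also used later to pass to the limit in the weak formulation) is established. This is a fixable, technical omission, but it is exactly the nontrivial part of the well-definedness statement, and the paper devotes the first half of its proof to it.
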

\begin{proof} We consider only $\bar u$, the argument for $\tilde u$ is analogous.

First observe that each term in the series is well defined. Let $x\in Q_0$. Then, the only term that needs further justification is the term corresponding to $y=0$.  in this case $w(x)+\frac{20\pi}{3} \fint_{Q_0} \nabla \Phi(x-z):A \dd z$ is well defined and uniformly bounded with respect to  $x \in Q_0$ since $ \nabla \Phi(x-z)$ is locally integrable. To see that the derivative is well defined as well let us consider the union of the $27$ neighbouring cells around $Q_0$,
\begin{align}
    \tilde Q_0 := \{ z \in \R^3 : z \in Q_y \text{ for some } y \in \Lambda_L \text{ with } { Q_y }\cap { Q_0} \neq \emptyset \} = [-4L,4L] \times [-2L,2L]^2.
\end{align}
Then, via integration by parts
$$\int_{\tilde Q_0} \nabla^2 \Phi(x-z):A \dd z = -\int_{\partial \tilde Q_0} \nabla\Phi(x-z) (A n) \dd S_z$$ which is uniformly bounded for $x \in Q_0$.

We now show that the series is absolutely convergent. Regarding the gradient of each term of the series \eqref{eq:ubar}, we observe that by \eqref{eq:approximation_w}, for $y \neq 0$  it holds
\begin{align}
\left| \nabla w(x-y)+\frac{20\pi}{3} \fint_{Q_y} \nabla^2 \Phi(x-z):A \dd z   \right|
 &\leq C|A| \frac{L}{|x-y|^4} 
\end{align}
This shows that the sum of the gradients is absolutely convergent. Because each term in the sum \eqref{eq:ubar} has vanishing average over the respective cell $Q_y$, the terms of the series decay with the same rate $\abs{x-y}^{-4}$, and thus also the sum itself is absolutely convergent. 

The periodicity is immediate from the construction. To see that $\bar u$ has vanishing mean, it is enough to notice that the $Q_0$ term has vanishing mean over $Q_0$ and that both $w$ and $\nabla \phi$ are skewsymmetric in the sense that they change sign under the transformation $x\mapsto -x$. In order to show that $\bar u$ satisfies the other identities in \eqref{eq:baru}, we note that both $w$ and $\nabla \Phi$ satisfy Stokes equation outside $B$, and we emphasize that the term $x \mapsto \int_{\tilde Q_y} \nabla \Phi(x-z):A \dd z$ satisfies $-\Delta v+ \nabla q= \div (1_{\tilde Q_y} A)$. By summation, no source term is induced in ${Q_0}$. 
\end{proof}

\begin{lem} \label{le:u.tilde.u.bar} For $\bar u$ defined in \eqref{eq:ubar} and $\tilde u$ defined in \eqref{def:u.tilde}, the following estimates hold.  
    \begin{align}
        \|\nabla (\bar u - \tilde u) - A\|_{L^\infty(B)} \lesssim L^{-5}, \label{est:u.bar.u.tilde}\\
        \|\nabla  \tilde u - \nabla \tilde u(0)\|_{L^\infty(B)} \lesssim L^{-4} \label{est:u.tilde.origin}, \\
        \|\nabla  \tilde u \|_{L^\infty(B)} \lesssim L^{-3} \label{est:nabla.u.tilde}.
    \end{align}
\end{lem}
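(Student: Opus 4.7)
My plan is to establish the three estimates by exploiting the algebraic structure of the series defining $\bar u$ and $\tilde u$, substituting the decomposition \eqref{eq:approximation_w}, and then estimating the remaining terms by a case split between near and far lattice points.

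For \eqref{est:u.bar.u.tilde}, the key observation is that the leading stresslet part $-\tfrac{20\pi}{3}\nabla\Phi:A$ of $w(\cdot - y)$ in each $y\neq 0$ summand of $\bar u$ cancels exactly the corresponding summand of $\tilde u$ together with their mean corrections over $Q_y$. First I would substitute $w(\eta) = -\tfrac{20\pi}{3}\nabla\Phi(\eta):A + R[A](\eta)$ into each $y \neq 0$ summand of $\bar u$ and verify by direct computation that
\begin{align*}
\bar u_y(x) - \tilde u_y(x) = R[A](x-y) - \fint_{Q_y} R[A](x'-y)\, \dd x'.
\end{align*}
For the $y = 0$ summand, the identity $w(x) = Ax$ on $B$ reduces the difference to $w(x)$ minus a constant (the mean of the $y=0$ part of $\bar u$). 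Since all mean corrections are independent of $x$, they drop out under the gradient, giving $\nabla(\bar u - \tilde u)(x) - A = \sum_{y \neq 0}\nabla R[A](x - y)$ on $B$. The homogeneity of $R[A]$ at degree $-4$ yields $|\nabla R[A](x-y)| \ls |y|^{-5}$, and a standard shell-counting estimate $\sum_{y \in \Lambda_L \setminus \{0\}} |y|^{-5} \ls L^{-5}$ concludes.

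For \eqref{est:nabla.u.tilde} and \eqref{est:u.tilde.origin}, I would split $\tilde u$ into the $y = 0$ term $v_0(x) = \tfrac{20\pi}{3}\fint_{Q_0}\nabla\Phi(x-z):A\, \dd z$ and the sum over $y \neq 0$. The function $|Q_0| v_0$ is the Stokes solution with source $-\nabla\cdot(A \mathbf{1}_{Q_0})$, which is distributionally supported on $\partial Q_0$; the corresponding boundary-layer representation, together with $\dist(B, \partial Q_0) \gs L$, gives $\|\nabla^k v_0\|_{L^\infty(B)} \ls L^{-(k+2)}$ for $k = 1, 2$ (each derivative on $\Phi$ contributing a factor $L^{-1}$). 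For the $y \neq 0$ sum, the constant-in-$x$ mean corrections drop out of $\nabla^k \tilde u$, reducing the task to estimating
\begin{align*}
\sum_{y \neq 0}\left|\nabla^{k+1}\Phi(x-y) - \fint_{Q_y}\nabla^{k+1}\Phi(x-z)\, \dd z\right|,\quad k = 1, 2.
\end{align*}
I would split this sum into a near part ($|y|$ comparable to $L$, $O(1)$ terms) handled by the direct pointwise bound $L^{-(k+2)}$, and a far part, where Taylor expansion in $z$ around $y$ combined with the vanishing first moment $\fint_{Q_0}(z-y)\, \dd z = 0$ yields the improved bound $L^2 |y|^{-(k+4)}$; summing over shells gives $L^{-(k+2)}$. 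This produces $\|\nabla\tilde u\|_{L^\infty(B)} \ls L^{-3}$ (for $k = 1$), giving \eqref{est:nabla.u.tilde}, and $\|\nabla^2\tilde u\|_{L^\infty(B)} \ls L^{-4}$ (for $k = 2$), which combined with $|\nabla\tilde u(x) - \nabla\tilde u(0)| \leq |x|\,\|\nabla^2\tilde u\|_{L^\infty(B)}$ and $|x| \leq 1$ gives \eqref{est:u.tilde.origin}.

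The main technical subtlety will be the $y = 0$ term: since $x \in B$ lies in the interior of $Q_0$, the integrand $\nabla\Phi(x-z)$ is singular at $z = x$, so naive differentiation under the integral sign is not justified. The boundary-layer representation from the Stokes interpretation is essential, as it converts the question into a smooth boundary integral at distance $\sim L$ from $B$ where the decay of $\Phi$ and its derivatives directly yields the required negative powers of $L$. A secondary point is to fix the threshold between near and far lattice points so that the line segment $\{x - y - \theta(z - y) : \theta \in [0,1]\}$ underlying the Taylor remainder stays uniformly comparable to $|y|$, ensuring a uniform bound on the second derivative of $\nabla^{k+1}\Phi$ throughout the remainder.
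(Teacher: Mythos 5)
Your proposal is correct and follows essentially the same route as the paper: the stresslet parts of $\bar u$ cancel against $\tilde u$, leaving $A$ plus a sum of terms $\nabla R[A](\cdot-y)$ that is summable to $L^{-5}$ by homogeneity, while the other two bounds come from an integration by parts on the singular cell $Q_0$ together with decay and cell-average cancellation for the far cells. The only cosmetic deviation is that for \eqref{est:u.tilde.origin} you bound $\nabla^2\tilde u$ and apply the mean value theorem, whereas the paper estimates the difference $\nabla\tilde u(x)-\nabla\tilde u(0)$ directly (note also that your intermediate display $\fint_{Q_y}R[A](x'-y)\dd x'$ is not absolutely convergent as written, but this is harmless since that mean correction is constant in $x$ and drops out under the gradient).
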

\begin{proof}
Estimate~\eqref{est:u.bar.u.tilde} follows immediately from the definitions of $\bar u$ and $\tilde u$ as well as the fact that $R[A]$ in \eqref{eq:approximation_w} is homogeneous of degree $-4$. For estimate \eqref{est:u.tilde.origin} we split as follows for $x\in B$
\begin{align}\label{estimate_bar_u-tilde_u}
\begin{aligned}
\nabla \tilde u(x) - \nabla \tilde{u} (0) &= \frac{20\pi}{3} \left (\fint_{Q_0} (\nabla^2 \Phi(x-z)-\nabla^2 \Phi(-z)):A \dd z \right)\\
&\quad+\frac{20\pi}{3}\underset{y \in \Lambda_L \setminus \{0\}}{\sum}  \biggl ((\nabla^2 \Phi(x-y)-\nabla^2 \Phi(-y))  \\
& \qquad \qquad \qquad \qquad - \fint_{Q_y} (\nabla^2 \Phi(x-z)-\nabla^2 \Phi(-z)) \dd z \biggr):A.
\end{aligned}
\end{align}
We deal with the first term by first applying an integration by parts in order to get 
$$
\frac{1}{L^3} \left|\int_{\partial Q_0} (\nabla \Phi(x-z)-\nabla \Phi(-z)) \cdot (An) \dd z \right| \leq C \frac{|A|}{L^3} \int_{\partial Q_0} \left(\frac{1}{|x-z|^3} + \frac{1}{|z|^3} \right) |x| \dd S \leq C \frac{|A|}{L^4}.
$$
The remainder in \eqref{estimate_bar_u-tilde_u} can be handled directly by the same estimates as in the proof of Proposition~\ref{prop_bar_u}. Estimate \eqref{est:nabla.u.tilde} is shown analogously.
\end{proof}

\begin{prop} \label{pro:u.bar.u}
For $u$ satisfying \eqref{eq:u} and $\bar u$ defined in \eqref{eq:ubar}, it holds that
    \begin{align}
        \|\nabla (u - \bar u)\|_{H^1(\bar\T_{L})} \leq C \|A -  D \bar u\|_{L^2(B)}.
    \end{align}
\end{prop}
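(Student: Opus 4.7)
The essential idea is a variational argument combined with a Galerkin-type lift. I set $v := u - \bar u$, $g := A - D\bar u$ (viewed as an element of $L^2(B)$), and denote by $V_0$ the space of $\varphi \in H^1(\bar\T_{L})$ satisfying $\dv \varphi = 0$, $\int_{\bar\T_{L}} \varphi \dd x = 0$, and $D\varphi = 0$ on $B$, so that $\varphi$ reduces to a rigid motion $a + b \wedge x$ inside $B$.

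Step 1 (weak orthogonality). I will first show that
$\int_{\bar\T_{L}} Dv : D\varphi \dd x = 0$ for every $\varphi \in V_0$.
For $\bar u$, integrating by parts on $\bar\T_{L} \setminus B$ using the Stokes equation from Proposition \ref{prop_bar_u}, the only boundary contribution is on $\partial B$, where $\varphi = a + b \wedge x$ tests against $\int_{\partial B}\sigma[\bar u] n \dd S$ (zero by the force balance in \eqref{eq:baru}) and $\int_{\partial B} x \wedge \sigma[\bar u] n \dd S$ (zero by the torque balance). The analogous computation for $u$ uses \eqref{eq:u}. Subtracting the two identities yields the claim.

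Step 2 (lift). The field $g$ is symmetric and traceless (the latter because $\Tr A = 0$ and $\dv \bar u = 0$), and we have the identity $g = D(Ax - \bar u)$ on $B$. Korn's inequality modulo rigid motions on the unit ball then provides a rigid motion $\rho$ with $\|Ax - \bar u - \rho\|_{H^1(B)} \le C \|g\|_{L^2(B)}$, the constant being purely geometric and independent of $L$. I then extend this function by a bounded Sobolev extension operator $E \colon H^1(B) \to H^1_0(B_2)$ (valid since $B_2 \subset \bar\T_{L}$ for $L \ge 1$), correct the divergence by a Bogovskii operator supported in the annulus $B_2 \setminus B$ (which does not alter the values on $B$), and finally subtract the global mean. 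This produces $w \in H^1(\bar\T_{L})$ with $\dv w = 0$, $\int w \dd x = 0$, $Dw = g$ on $B$, and $\|w\|_{H^1(\bar\T_{L})} \le C \|g\|_{L^2(B)}$, with $L$-uniform constant.

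Step 3 (closure and regularity). By construction $v - w$ lies in $V_0$, so Step 1 with $\varphi = v - w$ gives $\int Dv : Dv \dd x = \int Dv : Dw \dd x$. Cauchy-Schwarz then yields $\|Dv\|_{L^2(\bar\T_{L})} \le \|Dw\|_{L^2(\bar\T_{L})} \le C \|g\|_{L^2(B)}$, and Korn's inequality on the torus (for mean-zero fields) upgrades this to an $L^2$ bound on $\nabla v$. To promote this to the full $H^1$ bound on $\nabla v$, one invokes Cattabriga-type interior and boundary regularity for the Stokes problem in $\bar\T_{L} \setminus \overline{B}$ (uniform in $L$ because all local geometry near $\partial B$ is at unit scale), together with the observation that, inside $B$, $u$ is affine (since $Du = A$ is constant), so $\nabla^2 v |_B = -\nabla^2 \bar u |_B$ is controlled directly via the explicit structure of $\bar u$. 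The main obstacle is Step 2, namely producing a divergence-free lift that realizes the symmetric gradient data $g$ on $B$ exactly and enjoys $L^2(B)$-sharp control with $L$-independent constant; once the lift is available, the rest is a routine Lax--Milgram-type computation followed by elliptic regularity.
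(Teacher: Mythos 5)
Your Steps 1--3 (orthogonality of $Dv$ against test fields that are divergence-free and rigid in $B$, plus an extension--Bogovskii lift realizing the symmetric-gradient data $A-D\bar u$ on $B$ with an $L$-uniform constant) are essentially the paper's own argument: the paper phrases your Steps 1 and 3 as energy minimality of $v=\bar u-u$ among fields with the same $Dv$ in $B$, and cites \cite[Lemma 4.6]{NiethammerSchubert19} for precisely the lift you construct by hand, so up to the $L^2$ bound on $\nabla(u-\bar u)$ your proof matches the paper's. One caveat: the additional elliptic-regularity passage you sketch to reach the literal $H^1$-norm of $\nabla(u-\bar u)$ is not in the paper (whose proof, like the later application, only produces and uses the $L^2$ bound on $\nabla(u-\bar u)$), and as written it is not complete, since the Dirichlet trace of $v$ on $\partial B$ contains the unknown rigid part $a+b\wedge x$ of $u$ inside $B$, which is not obviously controlled by $\|A-D\bar u\|_{L^2(B)}$; this gap, however, concerns only the extra step and not the substance of the estimate actually needed.
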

\begin{proof}
Let $v := \bar u - u$. Then by definition of $u$ and Proposition \ref{prop_bar_u}, $v$ satisfies
         \begin{align}
         \left\{\begin{array}{rcl}
        - \Delta v + \nabla p = 0, \quad  \dv  v &=& 0 \qquad \text{in } \bar\T_{L} \setminus B, \\
    \displaystyle  \int_{\partial B} \sigma[v] n \dd S=   \int_{\partial B} x \wedge \sigma[\bar v] n \dd S&=& 0.
    \end{array} \right.
    \end{align}
    By standard considerations, $\|\nabla v\|_{L^2(\bar\T_{L})} \leq \|\nabla w\|_{L^2(\bar\T_{L})}$ for all $w \in H^1(\bar\T_{L})$ with $D v = D w$ in $B$ and such a function exists with 
    \begin{align}
        \|\nabla w \|_{L^2(\bar\T_{L})} \lesssim 
        \| D v\|_{L^2(B)} = \|A -  D \bar u\|_{L^2(B)}. 
    \end{align}
    We refer to \cite[Lemma 4.6]{NiethammerSchubert19} for details.
\end{proof}

\begin{proof}[Proof of Theorem \ref{th:main}\ref{it:nonsymmetric.torus}]

Direct computation starting with \eqref{eq:gradphi} yields $$\curl (\nabla \Phi(z):A)= -\frac{3}{4\pi} \frac{Ax \wedge x}{|x|^5}.$$
Recalling that $ A=\begin{pmatrix} 0&1&0\\1&0&0\\0&0&0\end{pmatrix}$ we get 
\begin{align}
        \frac 15\left(\curl \tilde u(0)\right)_3 = \fint_{Q_0} \frac{z_1^2-z_2^2}{\abs{z}^5}\dd z-\sum_{y\in \Lambda_L\setminus \{0\}} \left(\frac{y_1^2-y_2^2}{\abs{y}^5} - \int_{Q_y} \frac{z_1^2-z_2^2}{\abs{z}^5} \dd z \right).
\end{align}
Thus, setting $\bar c = 5 c_0$ with $c_0$ being the constant from  Lemma~\ref{le:sign} below, we find, using that $\curl u$ is constant in $B$,
\begin{align}
        \Bigl|\curl u(0) - \frac{\bar  c}{L^3} e_3\Bigr| \leq \left| \fint_{B} (\curl u - \curl \bar u)  \dd x \right|+ \left|\fint_{B} (\curl \bar u - \curl \tilde u) \dd x \right|+ \left|\fint_{B} (\curl \tilde u - \curl \tilde u(0))\dd x \right|. 
\end{align}
Using Lemma~\ref{le:curl.B_R.B_1} below, as well as Proposition~\ref{pro:u.bar.u} and Lemma~\ref{le:u.tilde.u.bar}, we have
\begin{align}
       \left| \fint_B (\curl u - \curl \bar u \dd x \right|&= \left| \fint_{B_L} (\curl u - \curl \bar u) \dd x \right| \\
        &\lesssim L^{-3/2} \|\nabla(u - \bar u)\|_{L^2(\bar \T_{L})} \lesssim 
        L^{-3/2} \|A - D\bar u\|_{L^2(B)} \\
        &\lesssim L^{-3/2} \left(\|\nabla (\tilde u -\bar u) - A\|_{L^2(B)} +  \|\nabla \tilde u \|_{L^2(B)} \right) \lesssim L^{-9/2}.
\end{align}
    Combining this with the estimates for $\fint_{B} |\curl \bar u - \curl \tilde u)|\dd x$ and  $\fint_{B} |\curl \tilde u - \curl \tilde u(0)|\dd x$ provided by Lemma \ref{le:u.tilde.u.bar}, we conclude 
    \begin{align}
         \Bigl|\curl u(0) - \frac{\bar  c}{L^3} e_3\Bigr| \lesssim L^{-4}.
    \end{align}
    This implies the assertion by rescaling to the torus $\bar\T_{1}$ and the ball $B_R$.
\end{proof}
\section{Auxiliary results}

\begin{lem} \label{le:curl.B_R.B_1}
Let $w\in H^1(B_R)$ with $R>1$ and satisfying
\begin{align*}
      \left\{
\begin{array}{rcl}
    -\Delta w+\nabla p=0\quad,     \dv u&=&0\qquad \text{in }B_R\setminus B,\\
  \displaystyle  \int_{\partial B} x\wedge \sigma[w]n\dd S&=&0.
    \end{array} \right.
\end{align*}
Then 
\begin{align*}
    \fint_{B} \curl w\dd x=\fint_{B_R} \curl w\dd x.
\end{align*}
\end{lem}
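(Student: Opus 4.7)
The plan is to reduce the lemma to a single vector identity via a spherical-average argument on $\omega:=\curl w$, and then derive that identity from the no-torque condition.

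Since $w$ is a Stokes flow in $B_R\setminus B$, the vorticity $\omega$ is harmonic there (by taking the curl of $-\Delta w+\nabla p=0$) and $\div\omega=0$ throughout $B_R$. Harmonicity of each Cartesian component forces the spherical average $\fint_{\partial B_r}\omega$ on the annulus to have the form $\vec a+\vec b/r$. A short mode-counting argument then shows $\div\omega=0$ forces $\vec b=0$: the term $\div(\vec b/r)=-\vec b\cdot x/r^3$ has angular degree $l=1$ with radial factor $r^{-2}$, whereas any other harmonic mode of $\omega$ (with radial factor $r^l$ or $r^{-l-1}$) contributes to $\div\omega$ only with angular-radial combinations distinct from $(l=1,r^{-2})$, so nothing can cancel it. Hence $\fint_{\partial B_r}\omega\equiv\bar\alpha$ for $r\in[1,R]$. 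Integrating in $r$ gives $\int_{B_R\setminus B}\omega=\tfrac{4\pi}{3}(R^3-1)\bar\alpha$, and a short computation reduces the lemma to proving $\fint_B\omega=\bar\alpha$.

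For this, set $F(r):=\int_{S^2}\xi\wedge w(r\xi)\,\dd\sigma(\xi)$ and note that the identity $\int_{B_r}\curl w=\int_{\partial B_r}n\wedge w$ gives $r^2F(r)=\int_{B_r}\omega$ for $r\in(0,R]$; in particular $F(1)=\int_B\omega$. Differentiating $r^2F(r)=\int_B\omega+\tfrac{4\pi}{3}(r^3-1)\bar\alpha$ at $r=1$ yields the first relation
\[
2F(1)+F'(1)=4\pi\bar\alpha.
\]
On $\partial B$ one has $n=x$, so the pressure term in $\int_{\partial B}x\wedge\sigma[w]n\,\dd S=0$ vanishes by $x\wedge x=0$, leaving $\int_{\partial B}x\wedge(Dw\cdot x)\,\dd S=0$. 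Writing $Dw\cdot x=(x\cdot\nabla)w-\tfrac12\omega\wedge x$ (from $\nabla w=Dw+\tfrac12\omega\wedge\cdot$) and using $x\wedge(\omega\wedge x)=\omega-(\omega\cdot x)x$ on the unit sphere, together with the direct identification $\int_{\partial B}x\wedge(x\cdot\nabla)w\,\dd S=F'(1)$, we obtain the second relation
\[
F'(1)=2\pi\bar\alpha-\tfrac12\int_{S^2}(\omega\cdot\xi)\xi\,\dd\sigma.
\]

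Finally, the divergence theorem in $B$ combined with $\div\omega=0$ gives
\[
\int_{S^2}(\omega\cdot\xi)\xi_j\,\dd\sigma=\int_{\partial B}\omega_i n_i x_j\,\dd S=\int_B\omega_j\,\dd x=F(1)_j,
\]
so the second relation becomes $F'(1)=2\pi\bar\alpha-\tfrac12 F(1)$. Eliminating $F'(1)$ from the two displayed identities yields $\tfrac32F(1)=2\pi\bar\alpha$, hence $\fint_B\omega=\tfrac{3}{4\pi}F(1)=\bar\alpha$, completing the proof. The main obstacle is the initial spherical-average step: extracting from the interplay of harmonicity and $\div\omega=0$ the constancy of $\fint_{\partial B_r}\omega$ across the annulus. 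The remaining steps are routine divergence-theorem manipulations and algebraic rearrangements.
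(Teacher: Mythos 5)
Your argument is correct in substance, and it is a genuinely different route from the paper's. The paper proves the lemma by a duality/reciprocal-theorem argument: it introduces the explicit rotational Stokes flow $\varphi$ between the concentric spheres (equal to $\omega\wedge x$ in $B$, vanishing on $\partial B_R$), integrates by parts twice, and lets the torque-free condition kill the extra boundary term; this uses only the weak formulation and needs no structure of the vorticity. You instead analyze $\omega=\curl w$ directly: harmonicity of $\omega$ in the annulus plus $\dv\omega=0$ forces the spherical mean $\fint_{\partial B_r}\omega$ to be a constant $\bar\alpha$ (your solid-harmonic mode-counting for excluding the $b/r$ term is right: only the $l=0$ exterior modes can produce an $(l=1,r^{-2})$ contribution to the divergence), and then the flux relation $\frac{\dd}{\dd r}\bigl(r^2F(r)\bigr)=4\pi r^2\bar\alpha$ together with the torque identity pins down $F$. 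What the paper's approach buys is robustness at low regularity; what yours buys is a more transparent picture of why the vorticity average is the same on every ball, without constructing an auxiliary flow.

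The one genuine weak point is the evaluation at $r=1$: for $w\in H^1(B_R)$ solving Stokes only in the annulus, neither $F'(1)$ nor the boundary integrals $\int_{\partial B}\omega\,\dd S$ and $\int_{\partial B}(\omega\cdot x)x\,\dd S$ are defined (only the weak normal trace $\sigma[w]n\in H^{-1/2}(\partial B)$ is), so the two relations at $r=1$ are not literally licensed by the hypotheses. The fix is standard and stays within your scheme: since $\sigma[w]$ is symmetric and divergence-free in the annulus, the torque $\int_{\partial B_r}x\wedge\sigma[w]n\,\dd S$ is independent of $r\in(1,R)$ and hence vanishes for every such $r$, where $w$ is smooth by interior regularity. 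Running your two identities at radius $r$ gives $2rF(r)+r^2F'(r)=4\pi r^2\bar\alpha$ and $F'(r)=2\pi\bar\alpha-\tfrac{1}{2r}F(r)$, hence $F(r)=\tfrac{4\pi r}{3}\bar\alpha$, i.e. $\fint_{B_r}\omega=\bar\alpha$ for all $r\in(1,R)$; letting $r\to1^+$ and $r\to R^-$ (dominated convergence, $\omega\in L^1$) yields both $\fint_B\omega=\bar\alpha$ and $\fint_{B_R}\omega=\bar\alpha$, which is the claim. With that modification your proof is complete.
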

\begin{proof}
Let $\omega\in \R^3$ and $\varphi\in H^1_0(B_R)$ the solution to 
\begin{align*}
      \left\{
\begin{array}{rcll}
    -\Delta \varphi+\nabla p=0\quad, \    \dv \varphi&=&0&\qquad \text{in }B_R\setminus B,\\
    \varphi&=&\omega\wedge x &\qquad \text{in }B.
        \end{array} \right.
\end{align*}
It is easy to verify that the solution $\varphi$ in $B_{R} \setminus B$ is given by 
\begin{align*}
    \varphi(x)=\frac{R^3}{R^3-1}\left(\frac{1}{\abs{x}^3}-\frac{1}{R^3}\right)\omega\wedge x.
\end{align*}
The corresponding normal stress on $B$ and $B_R$ is $\sigma[\varphi]n=-3\frac{R^3}{R^3-1}\omega\wedge n$ and $\sigma[\varphi]n=-3\frac{1}{R^3-1}\omega\wedge n$, where $n$ is the outward unit normal to $B$ and $B_R$, respectively. 
We compute
\begin{align*}
\omega\cdot\int_B \curl w\dd x&=\int_{\partial B}\omega\cdot(n\wedge w)\dd S=\int_{\partial B}w\cdot(\omega\wedge n)\dd S=-\frac{R^3-1}{3R^3}\int_{\partial B}w\cdot(\sigma[\varphi]n)\dd S\\
&=\frac{R^3-1}{3R^3}\int_{B_R\setminus B}\nabla w\colon \nabla \varphi\dd x- \frac{R^3-1}{3R^3}\int_{\partial B_R}  w\cdot(\sigma[\varphi]n)\dd S\\
&=\frac{R^3-1}{3R^3}\int_{\partial B}(\sigma[w]n)\cdot \varphi+ \frac{1}{R^3}\int_{\partial B_R}  w\cdot(\omega\wedge n)\dd S\\
&=\frac{R^3-1}{3R^3}\int_{\partial B}(\sigma[w]n)\cdot (\omega\wedge n)+ \frac{1}{R^3}\omega\cdot\int_{\partial B_R}  n\wedge w\dd S\\
&=\frac{R^3-1}{3R^3}\omega\cdot\int_{\partial B}n\wedge(\sigma[w]n)\dd S+ \frac{1}{R^3}\omega\cdot\int_{B_R}  \curl w\dd S.
\end{align*}
Since the first term vanishes and $\omega$ was arbitrary, this proves the statement.
\end{proof}

\begin{lem} \label{le:sign}
Let $\Lambda_L$ be the lattice defined in \eqref{eq:Lambda}. There exists a constant $c_0>0$ such that
\begin{align*}
    \fint_{Q_0} \frac{z_1^2-z_2^2}{\abs{z}^5}\dd z-\sum_{y\in \Lambda_L\setminus \{0\}} \left(\frac{y_1^2-y_2^2}{\abs{y}^5}-\fint_{Q_y}\frac{z_1^2-z_2^2}{\abs{z}^5}\dd z\right)=\frac{1}{L^3}c_0.
\end{align*}
\end{lem}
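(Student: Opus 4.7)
The plan is to rescale to $L=1$ and then compare the lattice sum against the analogous sum on the cubic lattice $2\Z^3$ (with cubic cells $C=[-1,1]^3$), which vanishes by symmetry. By the degree-$-3$ homogeneity of $f(z)=(z_1^2-z_2^2)/|z|^5$ under $z\mapsto Lz$, the problem reduces to showing that for $\Lambda=4\Z\times 2\Z\times 2\Z$ and $Q_y=y+[-2,2]\times[-1,1]^2$,
\[
c_0 := \fint_{Q_0} f\,dz - \sum_{y\in\Lambda\setminus\{0\}}\Bigl(f(y)-\fint_{Q_y}f\,dz\Bigr)>0.
\]
Absolute convergence of the series holds since Taylor expansion, the moments $\fint_{Q_0}z_1^2=4/3$, $\fint_{Q_0}z_j^2=1/3$ for $j=2,3$, and harmonicity $\Delta f=0$, yield $f(y)-\fint_{Q_y}f=-\tfrac12\partial_1^2 f(y)+O(|y|^{-7})$, which is summable.

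Let $\alpha_y := \fint_{y+C} f$. The analogous sum for the cubic pair $(2\Z^3,C)$ vanishes by the symmetry $z_1\leftrightarrow z_2$ (which preserves both lattice and cube but flips $f$). Using $2\Z^3=\Lambda\sqcup(\Lambda+2e_1)$ together with $Q_y=(y-e_1+C)\cup(y+e_1+C)$ (so that $\fint_{Q_y}f=\tfrac12(\alpha_{y-e_1}+\alpha_{y+e_1})$), subtracting the vanishing cubic sum rewrites $c_0$ as
\[
c_0 = \tfrac12\sum_{y\in\Lambda}\bigl(\alpha_{y-e_1}-2\alpha_y+\alpha_{y+e_1}\bigr)+R,\qquad R:=\sum_{y\in\Lambda+2e_1}\bigl[f(y)-\alpha_y\bigr].
\]
The first sum captures the cell-shape anisotropy (elongated vs.\ cubic), and $R$ the lattice-point anisotropy (the sites in $2\Z^3$ missing from $\Lambda$).

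The telescoping identity $\{y\pm e_1:y\in\Lambda\}=(2\Z+1)\times 2\Z\times 2\Z$ is a translated cubic lattice whose cubic cells tile $\R^3$; the associated sum $\sum_{y\in(2\Z+1)\times 2\Z\times 2\Z}\alpha_y$ vanishes in the principal-value sense by the vanishing spherical mean of $f$. Hence the first sum reduces to $-\sum_{y\in\Lambda}\alpha_y = -\tfrac{1}{|C|}\int\chi(z_1)f(z)\,dz$, where $\chi$ is the $4$-periodic indicator of the strip $\{|z_1\bmod 4|\le 1\}$. Its Fourier expansion $\chi=\tfrac12+\sum_{m\text{ odd}}\tfrac{\sin(\pi m/2)}{\pi m}e^{i\pi mz_1/2}$, combined with the distributional identity $\widehat{(\partial_1^2-\partial_2^2)|z|^{-1}}(k_1,0,0)=-4\pi$ and the Leibniz series $\sum_{m\ge 1\text{ odd}}\tfrac{\sin(\pi m/2)}{\pi m}=\tfrac14$, gives $\sum_{y\in\Lambda}\alpha_y=-\pi/12$, so the first sum contributes $+\pi/12$. (Rigorously, this conditionally convergent manipulation is to be justified by inserting a heat-kernel factor $e^{-\varepsilon|z|^2}$ and passing to $\varepsilon\to 0^+$ on both the spatial and Fourier sides.)

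Harmonicity and the vanishing second moments of $C$ force $f(y)-\alpha_y=O(|y|^{-7})$, making $R$ absolutely convergent. The dominant contributions come from the two nearest sites $y=\pm 2e_1$: writing $f=\mathop{\mathrm{div}}\bigl((-z_1,z_2,0)/(3|z|^3)\bigr)$ and applying the divergence theorem reduces $\alpha_{\pm 2e_1}=\fint_{[1,3]\times[-1,1]^2}f$ to elementary surface integrals expressible via arcsine. The main obstacle is to establish the quantitative bound $|R|<\pi/12$ required to conclude $c_0>0$: numerics indicate $|R|\approx 0.24$, close to $\pi/12\approx 0.26$, so a coarse tail bound will not suffice; instead one must evaluate the closest few shells of $\Lambda+2e_1$ sharply using the divergence-theorem representation, and only then invoke the $O(|y|^{-7})$ decay on the remaining tail.
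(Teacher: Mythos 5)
Your reduction is elegant and your numbers are plausibly consistent with the truth (your predicted value $c_0\approx \pi/12-0.24\approx 0.02$ matches what the paper's computation yields), but as it stands the argument has a genuine gap at exactly the decisive point. The content of the lemma is not the existence of \emph{some} constant (the identity with $1/L^3$ is mere homogeneity) but the strict positivity $c_0>0$, and your proposal ends by conceding that the required bound $\abs{R}<\pi/12$ is not established: you report numerics suggesting $\abs{R}\approx 0.24$ against $\pi/12\approx 0.26$, and only sketch a plan (sharp evaluation of the nearest shells of $\Lambda+2e_1$ via a divergence-theorem representation, then a tail bound) without carrying it out or giving explicit constants. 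Since the margin is under ten percent, this is not a routine omission; until that bound is proved with explicit error control, the positivity — i.e.\ the lemma — is not proved. A second, smaller but real issue is the conditionally convergent bookkeeping: after subtracting the (legitimately vanishing) cubic-lattice sum, you split the absolutely convergent second-difference sum $\frac12\sum_{y\in\Lambda}(\alpha_{y-e_1}-2\alpha_y+\alpha_{y+e_1})$ into pieces such as $\sum_{x\in(2\Z+1)\times 2\Z\times 2\Z}\alpha_x$ and $\sum_{y\in\Lambda}\alpha_y$, which are only conditionally convergent; for kernels homogeneous of degree $-3$ such sums are summation-shape dependent, so assigning them the values $0$ and $-\pi/12$ requires fixing a regularization (your heat-kernel proposal) \emph{and} verifying that the same regularization is compatible with the original absolutely convergent expression. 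This is plausible but is work you have not done, and the Fourier evaluation yielding $-\pi/12$ is likewise asserted rather than derived.

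For comparison, the paper's route is deliberately cruder but closes quantitatively: it rescales, shows that the cell-average corrections over a large symmetric box cancel by the $z_1\leftrightarrow z_2$ antisymmetry up to an explicit boundary-strip error $4/(4k+1)$, bounds the tail $\abs{y}_\infty>2k$ by an explicit gradient estimate, and then certifies the remaining \emph{finite} plain lattice sum numerically ($\le -2.25$ against a total error $\le 2.1$ at $k=35$). If you want to salvage your approach, note that you would in the end also need a certified numerical evaluation (of the near shells of $R$ and of the Fourier constant), so you should either supply those computations with rigorous error bounds in the spirit of the paper's finite-box reduction, or accept that the structural decomposition, while illuminating (it cleanly separates cell-shape anisotropy from lattice-site anisotropy), does not by itself yield the sign.
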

\begin{proof}
Since the individual terms in the sum decay like $\abs{y}^{-4}$ the sum exists and converges absolutely. 

By homogeneity, it is enough to consider $L=\frac 12$. More precisely, we denote the rescaled lattice $\Lambda=\{(y_1,y_2,y_3)\colon y_1\in 2\Z,\;y_2,y_3\in \Z\}$, the rescaled cells $Q'_y=y+[-1,1]\times[-\tfrac 12,\tfrac 12]^2$, and for $y\in \Lambda$
\begin{align*}
     S(y)&=S(y_1,y_2,y_3)=\frac{y_1^2-y_2^2}{\abs{y}^5},\qquad
     S'(y)=S(y)-\fint_{Q'_y}S(z)\dd z.
\end{align*} 
Then, it is enough to show that 
\begin{align}\label{eq:L}
    c_0'\coloneqq -\fint_{Q'_0}S(z)\dd z+\sum_{y\in \Lambda\setminus \{0\}} S'(y)<0.
\end{align}
The strategy to prove \eqref{eq:L} is to show that is is enough to sum over all lattice points in a large enough cube and to estimate the remaining sum. 
We start by proving that, if summing over a finite cube, it is possible to ignore the mean intergals in the sum up to a small error.
We denote $\abs{y}_\infty=\max\{\abs{y_1},\abs{y_2},\abs{y_3}\}$ and rewrite
\begin{align}\label{eq:sum_seq}
    c_0'=\lim_{k\to \infty} -\fint_{Q'_0}S(z)\dd z+\sum_{y\in \Lambda\setminus \{0\},\abs{y}_\infty\le 2k} S'(y).
\end{align}
The contribution of the mean integral terms at level $k$ is 
\begin{align}\label{eq:int_contr}
     \fint_{Q'_0}S(z)\dd z+&\sum_{y\in \Lambda\setminus \{0\},\abs{y}_\infty\le 2k} \fint_{Q'_y}S(z)\dd z\\&=\frac 12\int_{[-2k-1,2k+1]\times [-2k-\frac 12,2k+\frac 12]^2} S(z)\dd z\\
     &=\frac 12\int_{[-2k-\frac 12,2k+\frac 12]^3} S(z)\dd z+\int_{[2k+\frac 12,2k+1]\times[-2k-\frac 12,2k+\frac 12]^2} S(z)\dd z\\
     &=\int_{[2k+\frac 12,2k+1]\times[-2k-\frac 12,2k+\frac 12]^2} S(z)\dd z.
\end{align}
The last identity holds since $S(y_1,y_2,y_3)=-S(y_2,y_1,y_3)$ and hence the integral vanishes on every domain that is invariant under the exchange of $y_1,y_2$.
Since 
\begin{align}\label{eq:int_remainder}
     \abs{\int_{[2k+\frac 12,2k+1]\times[-2k-\frac 12,2k+\frac 12]^2} S(z)\dd z}&\le \int_{[2k+\frac 12,2k+1]\times[-2k-\frac 12,2k+\frac 12]^2}\frac{1}{\abs{z}^3}\dd z\\
     &\le  \frac{1}{(2k+\frac 12)^3}\int_{[2k+\frac 12,2k+1]\times[-2k-\frac 12,2k+\frac 12]^2}\dd z \le \frac{4}{(4k+1)},
\end{align}
the combination of \eqref{eq:int_remainder} and \eqref{eq:int_contr} shows that 
\begin{align}\label{eq:rem_int}
    \abs{-\fint_{Q'_0}S(z)\dd z+ \sum_{y\in \Lambda\setminus \{0\},\abs{y}_\infty\le 2k} S'(y)-\sum_{y\in \Lambda\setminus \{0\},\abs{y}_\infty\le 2k} S(y)}\le \frac{4}{(4k+1)}.
\end{align}
We continue by estimating the parts of the sum in \eqref{eq:L} that satisfy $\abs{y}_\infty>2k$. 
Notice that for $z\in Q_y'$, it holds that $\abs{y-z}\le \sqrt{\frac 32} \le \frac 32$. Furthermore, the gradient of $S$ satisfies
\begin{align}
    \nabla S(z)=\frac{1}{\abs{z}^7}\begin{pmatrix}
    z_1(-3z_1^2+7z_2^2+2z_3^2)&\\
    z_2(-7z_1^2+3z_2^2-2z_3^2)&\\
    z_3(-5z_1^2+5z_2^2)&
    \end{pmatrix}
\end{align}
Using the estimate $\abs{S(y)-S(z)}\le \norm{\nabla S}_{L^\infty([y,z])}\abs{y-z}$, where $[y,z]=\set{\theta y+(1-\theta)z:\theta\in[0,1]}$ is the segment between $y$ and $z$, we infer for all $y \in \Lambda'$, $z \in Q_y'$
\begin{align*}
    \abs{S(y)-S(z)}\le \frac 32 \frac{7}{\abs{\abs{z}-\frac 32}^4}.
\end{align*}
We use this to estimate the sum outside a cube:
\begin{align}\label{eq:rem_sum}
\begin{aligned}
    \abs{\sum_{y\in \Lambda, \abs{y}_\infty>2k}S'(y)}&\le \frac{21}{2}\int_{ \abs{z}_\infty>2k-1}\frac{1}{\abs{\abs{z}-\frac 32}^4}\dd z
    \le \frac{21}{2}\int_{ \abs{z}>2k-1}\frac{1}{\abs{\abs{z}-\frac 32}^4}\dd z\\
    &=42\pi\int_{2k-1}^\infty \frac{r^2}{(r-\frac 32)^4}\dd r
    \le 42\pi \frac{(2k-1)^2}{(2k-\frac 52)^2}\int_{2k-1}^\infty \frac{1}{(r-\frac 32)^2}\dd r \\
    &=42\pi \frac{(2k-1)^2}{(2k-\frac 52)^3}
    =84\pi \frac{(4k-2)^2}{(4k-5)^3}\\
\end{aligned}
\end{align}
Combining \eqref{eq:rem_int} and \eqref{eq:rem_sum} yields 
\begin{align*}
    \abs{-\fint_{Q'_0}S(z)\dd z+ \sum_{y\in \Lambda\setminus \{0\}} S'(y)-\sum_{y\in \Lambda\setminus \{0\},\abs{y}_\infty\le 2k} S(y)}\le \frac{4}{(4k+1)}+84\pi \frac{(4k-2)^2}{(4k-5)^3}.
\end{align*}
The right hand side is smaller than $2.1$ for $k=35$ and thus the numerical result
\begin{align*}
   \sum_{y\in \Lambda\setminus \{0\},\abs{y}_\infty\le 70} S(y)\le -2.25
\end{align*}
shows \eqref{eq:L}. The numerical results were obtained with maple. For the source code we refer to the appendix at the end of the document.
\end{proof}

\section*{Acknowledgements}

The authors thank David G\'erard-Varet for discussions that have led to  the setup of the problem and R.S. thanks him and the IMJ-PRG for the hospitality during the stay in Paris where this article originates.   

All authors are grateful for the great scientific atmosphere they experienced in the Centro de Ciencias de Benasque Pedro Pascual during the 
workshop \enquote{IX Partial differential equations, optimal design and numerics}, where most of this article was conceived.

R.H. is supported  by the German National Academy of Science Leopoldina, grant LPDS 2020-10.

A.M. is supported by the SingFlows project, grant ANR-18-CE40-0027 of the French National Research Agency (ANR).

R.S. is supported by the Deutsche Forschungsgemeinschaft (DFG, German Research Foundation) through the collaborative research
centre ‘The mathematics of emergent effects’ (CRC 1060, Project-ID 211504053).

\appendix

\printbibliography

\includepdf[pages=-]{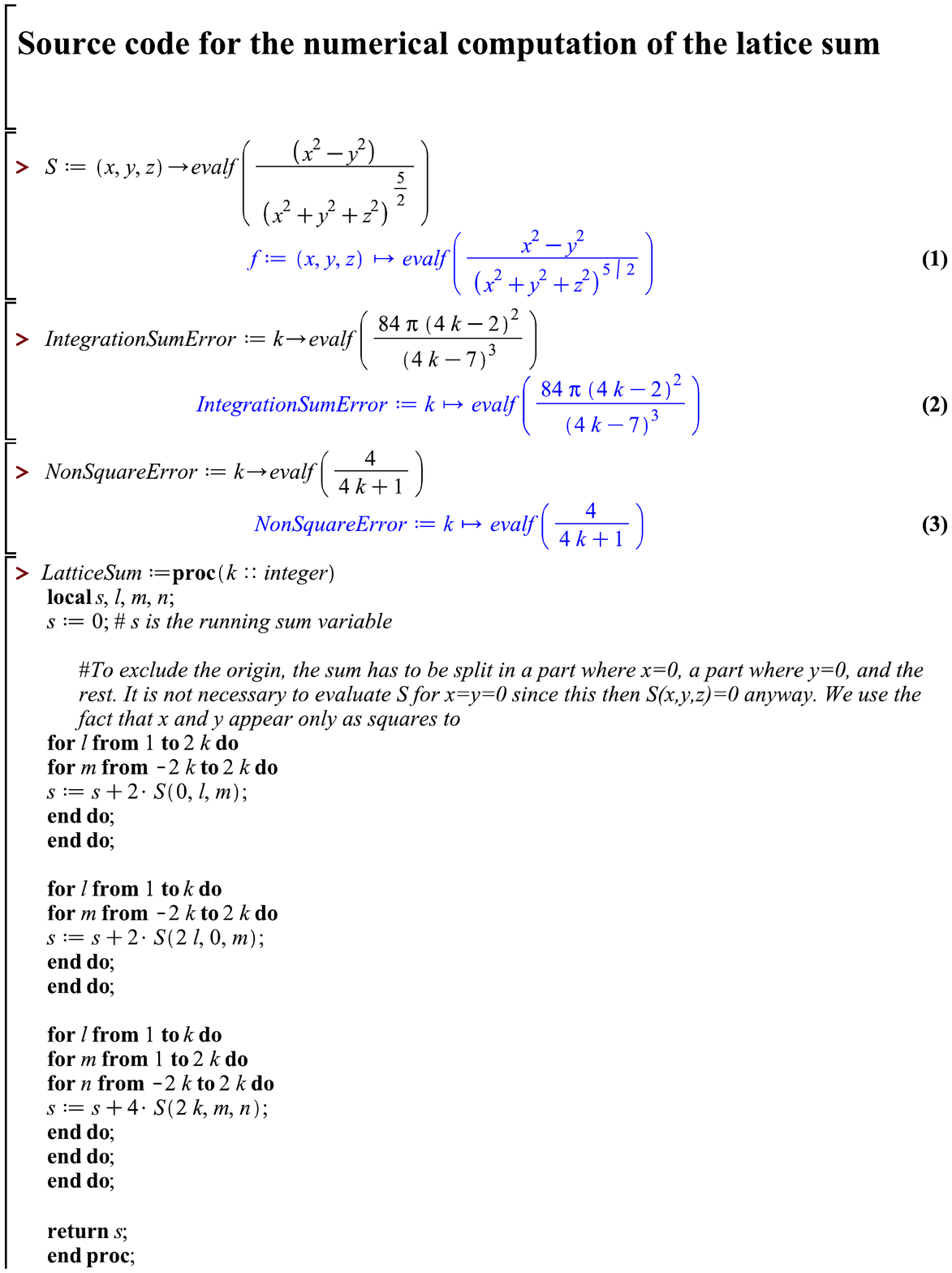}

\end{document}